\declaretheorem[name=Theorem,numberwithin=section]{thm}
\theoremstyle{definition}
\newtheorem{defn}{Definition}[section]
\newtheorem{rem}[defn]{Remark}
\theoremstyle{plain}
\newtheorem{conj}[defn]{Conjecture}
\newtheorem{lem}[defn]{Lemma}
\newcommand{\dsys}{\mathcal{A}}
\newcommand{\Kh}{\mathcal{KH}} 
\newcommand{\CKh}{\mathcal{CKH}} 
\newcommand{\JW}{\mathbf{P}}
\newcommand{\sixj}{ \left\{ \begin{array}{ccc}
2n & 2n & 2n \\ 
2n & 2n & 2n \\ 
\end{array}\right\} 
}
\newcommand{\tetran}{ \left\langle \begin{array}{ccc}
2n & 2n & 2n \\ 
2n & 2n & 2n \\ 
\end{array}\right\rangle 
}
\newcommand{\jw}{\vcenter{\hbox{\includegraphics[scale=.1]{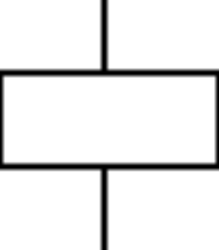}}}}
\newcommand{\jwc}{\vcenter{\hbox{\includegraphics[scale=.15]{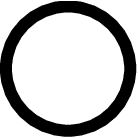}}}}
\newcommand{\kbsrc}
{\vcenter{\hbox{\includegraphics[scale=.2]{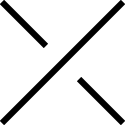}}}}
\newcommand{\kbsrh}
{\vcenter{\hbox{\includegraphics[scale=.2]{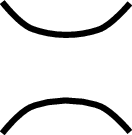}}}}
\newcommand{\kbsrv}
{\vcenter{\hbox{\includegraphics[scale=.2]{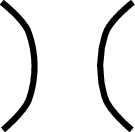}}}}
\newcommand{\kcircle}
{\vcenter{\hbox{\includegraphics[scale=.2]{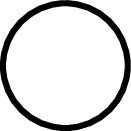}}}}
\newcommand{\vast}{\bBigg@{3.5}}
\newcommand{\Vast}{\bBigg@{5}}
\begin{document}
\title{Stable Khovanov homology and Volume}
\author[C. Lee]{Christine Ruey Shan Lee}

\address[]{Department of Mathematics, Texas State University}
\email[]{vne11@txstate.edu}

\begin{abstract} We show the $n$ colored Jones polynomials of a highly twisted link approach the Kauffman bracket of an $n$ colored skein element. This is in the sense that the corresponding categorifications of the colored Jones polynomials approach the categorification of the Kauffman bracket of the skein element in a direct limit, as the number of full twists of each twist region tends toward infinity, proving a quantum version of Thurston's hyperbolic Dehn surgery theorem implicit in Rozansky's work, and giving a categorical version of a result by Champanerkar-Kofman. In view of the volume conjecture, we compute the asymptotic growth rate of the Kauffman bracket of the limiting skein element at a root of unity and relate it to the volumes of regular ideal octahedra that arise naturally from the evaluation of the colored Jones polynomials of the link. 
\end{abstract}

\maketitle


\section{Introduction} 

To a link $L$ in $S^3$, the colored Jones function assigns a sequence of Laurent polynomials $\{\widehat{J}_{n+1}(L; A)\}_{n=1}^{\infty}$ called the colored Jones polynomials\footnote{We use the Kauffman variable $A$ here rather than $t$ with $A = t^{-1/4}$, and we use \ $\widehat{}$ \ to indicate the reduced versions of the polynomials. We index the polynomials by the dimension of the corresponding irreducible representations of $U_q(\mathfrak{sl}_{2})$. See Definition \ref{d.cjp}.}, where for $n=1$, $\widehat{J_2}(L; A)$ is the Jones polynomial. The celebrated volume conjecture predicts a close relationship between the exponential growth rate of the colored Jones polynomials at a root of unity and the volume of a hyperbolic knot. 

\begin{conj}[\cite{Kashaev}, \cite{MM}] \label{c.volume}
 Let $K$ be a hyperbolic knot and $N = n+1$, then  
\[ 2\pi \lim_{N \rightarrow \infty} \frac{\log |\widehat{J}_N(K; e^{\frac{\pi i}{2N}})|}{N} = vol(S^3 \setminus K). \] 
\end{conj}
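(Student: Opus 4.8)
The plan is to establish the equality by producing an explicit state-sum formula for $\widehat{J}_n(K; e^{\pi i/2n})$ and analyzing its large-$n$ asymptotics by the method of steepest descent, with the geometric input coming from Thurston's gluing equations. First I would fix a diagram $D$ of $K$ with $c$ crossings and use Kashaev's $R$-matrix (equivalently, the colored Jones state sum at $q = A^2 = e^{\pi i/n}$) to write the invariant as a sum over admissible colorings of the edges of $D$ by $\{0,1,\dots,n-1\}$, in which each crossing contributes a ratio of quantum factorials $(q)_k = \prod_{j=1}^k (1 - q^j)$. The first technical step is to replace each such factor, using the quantum-dilogarithm asymptotics $\log (q)_{\lfloor n t\rfloor} = \tfrac{n}{2\pi i}\,\mathrm{Li}_2(e^{2\pi i t}) + O(\log n)$ uniformly on compact sets, so that the state sum becomes $\sum_{\mathbf{z}} \exp\!\big(\tfrac{n}{2\pi i}\,V(\mathbf{z}) + O(\log n)\big)$ for a potential $V$ that is a signed sum of dilogarithms --- one per ideal tetrahedron of a triangulation of $S^3 \setminus K$ built from $D$.

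Next I would carry out the saddle-point analysis: show that the sum is governed by the critical points of $V$, the solutions of $\exp(\partial V / \partial z_j) = 1$ for all $j$. Invoking Neumann--Zagier theory, these critical-point equations are precisely Thurston's gluing equations, the distinguished (geometric) solution realizes the complete hyperbolic structure, and the critical value of $V$ there equals $i\big(\mathrm{vol}(S^3\setminus K) + i\,\mathrm{CS}(S^3\setminus K)\big)$ up to a fixed normalization. Taking $\log|\cdot|$ annihilates the Chern--Simons (imaginary) part and leaves $\tfrac{n}{2\pi}\mathrm{vol}(S^3\setminus K) + O(\log n)$; multiplying by $2\pi/n$ and sending $n \to \infty$ then yields the theorem. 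For the highly twisted links at the center of this paper the analysis is much more concrete: as the twisting in each region tends to infinity the corresponding local tangle converges, skein-theoretically, to a fixed Rozansky-type projector, the knot complement decomposes into regular ideal octahedra (each of volume $2 v_8 \approx 7.32$), and the dominant term of the asymptotic expansion can be read directly off the Kauffman bracket of the limiting skein element, whose exponential growth rate we compute --- so no general triangulation is needed.

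The step I expect to be the main obstacle, and the reason the statement is still open in full generality, is making the steepest-descent argument rigorous: one must show that the geometric critical point genuinely dominates --- i.e.\ that no competing critical point, no boundary-of-summation contribution, and no destructive interference among exponentially large terms with oscillating phases produces a larger growth rate --- and one must control the subexponential corrections (the $O(\log n)$ term, and ideally the $n^{3/2}$ ``one-loop'' factor) uniformly, and justify exchanging the limit with a sum whose length grows with $n$. Rigorous versions are currently available only for special families (the figure-eight knot, certain twist knots, fundamental shadow links). In the restricted highly twisted setting, the cancellation problem is precisely where the categorified stable-limit machinery of this paper earns its keep: the convexity and positivity visible at the level of Khovanov-type homology of the limiting skein element is what forces the octahedral term to survive without cancellation and pins down its growth rate.
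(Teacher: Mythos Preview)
The statement you are attempting to prove is the volume conjecture; it is stated in the paper as a \emph{conjecture}, not as a theorem, and the paper offers no proof of it. There is therefore nothing in the paper to compare your argument against. Your own write-up already concedes the essential point: the saddle-point/steepest-descent step --- showing that the geometric critical point dominates and that there is no cancellation from other critical points or boundary contributions --- is not known in general, and this is exactly why the conjecture remains open. A proof sketch that explicitly flags its own main step as unproven is not a proof.

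Your final paragraph also misreads what the paper's stable-limit machinery accomplishes. The paper does \emph{not} use the categorified limit to verify the volume conjecture for highly twisted links; rather, Theorem~\ref{t.volume} computes the growth rate of the Kauffman bracket of the limiting skein element $L^n_J$ and obtains $2Tv_8$, which the paper then observes is \emph{not} equal to $\mathrm{vol}(S^3\setminus L_\infty)$ (since $\mathrm{vol}(L_\infty)\ge 2(t-1)v_8$). So the stable Khovanov limit does not ``force the octahedral term to survive'' in a way that proves the conjecture; on the contrary, the paper highlights this discrepancy as evidence that categorical convergence and analytic convergence do not line up, and suggests looking for the missing volume in the difference $\widehat{J}_n^\infty(L;A)-\widehat{J}_n(L_k;A)$ as future work. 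Your claim that the categorified machinery ``earns its keep'' by resolving the cancellation problem is not supported by anything in the paper.
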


See \cite{MM} for the general version of the conjecture defined for all knots using simplicial volume, \cite[Section 3.3]{M} for an introduction, and \cite{chen} for an account of  current progress on the conjecture.   Despite the progress made, the general problem remains intractable. Nevertheless, if the conjecture is true, it would have many consequences for the relationship between quantum invariants related to the colored Jones polynomials and hyperbolic geometry, and so, studying whether related quantum invariants behave in a way that is suggested by the conjecture can be a fruitful way to understand the volume conjecture. 

We are interested in a ``quantum" analogue of the following version of Thurston's hyperbolic Dehn surgery theorem \cite{ThurstonW}. Let $M$ be a manifold with $t$ cusps, and let $M_k = M_{(k_1, \ldots, k_t)}$ for $k = (k_1, \ldots, k_t)$, with $k_i \in \mathbb{Z} \cup \infty$, denote the manifold resulting from performing a $-1/k_i$-Dehn surgery on the $i$th cusp of $M$ for $1\leq i \leq t$. 

\begin{thm}[Hyperbolic Dehn surgery] \label{t.hsurgery}
Let $k = (k_1, \ldots, k_t)$ and let $k\rightarrow \infty$ denote the limit as $k_1 \rightarrow \infty$, $k_2 \rightarrow \infty$, \ldots, $k_t \rightarrow \infty$. Then
\[ \lim_{k\rightarrow \infty} vol(M_{k})  = vol(M) \qquad \text{ and } \qquad vol(M_{k}) < vol(M) \qquad (k\not=\infty). \]  
\end{thm} 

Let $L_\infty = L_{\infty}(D)$ be the fully augmented link obtained from a diagram $D$ of $L$ by enclosing every twist region by a crossing circle \cite{Purcell}. For $k = (k_1, \ldots, k_t)$ let $L_k = L_k(D)$ be the link represented by the diagram with $k_i$ additional positive full twists in each twist region obtained by performing a $-1/k_i$ surgery on each crossing circle of $L_{\infty}(D)$. In the setting of Theorem \ref{t.hsurgery} where $M = S^3 \setminus L_{\infty}$ and $M_{k} = S^3 \setminus L_{k}$,  
it is natural to consider whether the colored Jones polynomials, or related invariants,  behave in an analogous way as predicted by the volume conjecture. For the colored Jones polynomials, Champanerkar and Kofman \cite{CK05} showed their Mahler measure converges, and their coefficient vectors decompose into fixed blocks that separate as $k\rightarrow \infty$. 

Since the volume conjecture is known for many fully augmented links by van der Veen \cite{vdV2009}, it would be useful to have a notion of convergence for the colored Jones polynomials for which one can compare the behavior of the limiting object at the root of unity considered by the conjecture to $vol(S^3 \setminus L_{\infty})$. With this goal in mind, the purpose of this article is to state and prove a quantum analogue of Theorem \ref{t.hsurgery} for the categorification of the colored Jones polynomials, by defining convergence in terms of the direct limit of the corresponding link homology groups. 

With $L_k = L_{(k_1, k_2, \ldots, k_t)}(D)$ as above let $L^n_J$ be the skein element obtained by taking the $n$-blackboard cable of $L_k$ and replacing every $n$-cabled twist region $\vcenter{\hbox{\includegraphics[scale=.15]{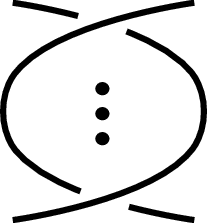}}}$  by a $2n$ Jones-Wenzl projector $\vcenter{\hbox{ \includegraphics[scale=.15]{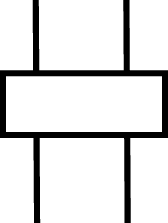}}}$. See Definition \ref{d.limitingskein}. Denote by $\{ \Kh_{i, j}(L_{k}, n) \}$ the set of $n$ colored Khovanov homology groups, which categorifies the unreduced $n+1$ colored Jones polynomial $J_{n+1}(L_k; A)$. Similarly, let $\{ \Kh_{i, j} (L_{J}, n) \}$ denote the categorification of the Kauffman bracket $\langle L^n_J\rangle$ of the skein element $L^n_J$. We state the first result. 

\begin{restatable}{thm}{tstable} \label{t.stability}
Fix an integer $n\geq 1$.  The set of homology groups $\{\Kh_{i, j}(L_{k}, n)\}$ forms a direct system as $k \rightarrow \infty$, and its direct limit is given by $\{\Kh_{i, j}(L_{J}, n)\}$.
\end{restatable}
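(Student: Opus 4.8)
The plan is to realize the colored Khovanov complex of each $L_k$ as a complex built from Bar-Natan–style cobordism categories, and to track what happens in a twist region as the number of half-twists $k_i \to \infty$. Recall that for a single strand colored by $n$, the $n$-colored Khovanov homology is obtained by cabling the diagram $n$ times and inserting a categorified Jones–Wenzl projector $\JW_n$ into each cable. A twist region with $k_i$ half-twists, once $n$-cabled and with $\JW_n$ inserted on the outgoing strands, becomes a complex $C_{k_i}$ in the category of $(2n,2n)$-tangles (a module over the category where objects are crossingless matchings and morphisms are dotted cobordisms). The key input — essentially Rozansky's construction, and the categorified analogue of the fact that a long sequence of twists converges to a $2n$-cabled projector — is that as $k_i \to \infty$ the complexes $C_{k_i}$ stabilize: there is a map $C_{k_i} \to C_{k_i + 1}$ (adding one more full twist, i.e. two half-twists, which near the projector acts by a degree shift and an inclusion of a subcomplex), and the homotopy colimit $\varinjlim_{k_i} C_{k_i}$ is chain homotopy equivalent to the categorified $2n$-colored Jones–Wenzl projector $\JW_{2n}$, which is exactly what gets inserted to form $L^n_J$. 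This last assertion is where I would lean on Definition~\ref{d.limitingskein} and on the earlier sections' treatment of the tail behavior of twist complexes.

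The steps, in order. (1) Set up the functoriality: for each $i$ there is a chain map $f_i \colon \CKh(L_k) \to \CKh(L_{k + e_i})$ induced by the local inclusion of twist complexes $C_{k_i} \hookrightarrow C_{k_i+1}$ (composed with the structural maps gluing the twist region into the rest of the diagram), and these maps commute for different $i$, so $\{\CKh(L_k)\}$ is a direct system indexed by $\mathbf{Z}_{\geq 0}^t$ with the product order. Passing to homology, $\{\Kh_{i,j}(L_k,n)\}$ is a direct system of abelian groups. (2) Show the direct limit commutes with taking homology — this is automatic since direct limits over a directed poset are exact — so $\varinjlim_k \Kh_{i,j}(L_k,n) = \Kh_{i,j}(\varinjlim_k \CKh(L_k))$, where the colimit on the right is the obvious colimit of chain complexes. (3) Identify $\varinjlim_k \CKh(L_k)$ locally: since colimits in a functor/module category are computed objectwise, and the rest of the diagram outside the twist regions is fixed, $\varinjlim_k \CKh(L_k)$ is obtained from the fixed ambient complex by replacing each twist-region factor $C_{k_i}$ with $\varinjlim_{k_i} C_{k_i}$. (4) Invoke the stabilization result to identify $\varinjlim_{k_i} C_{k_i} \simeq \JW_{2n}$ (up to the appropriate grading shift and Koszul sign bookkeeping), so that the total complex is precisely $\CKh(L^n_J)$ as in Definition~\ref{d.limitingskein}. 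Taking homology gives $\Kh_{i,j}(L^n_J)$, completing the identification.

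The main obstacle is Step (4): making precise the sense in which the sequence of twist complexes $C_{k_i}$ converges to $\JW_{2n}$ at the chain level, and checking that the colimit of the $C_{k_i}$ is genuinely (not just after truncation in each bidegree) homotopy equivalent to the categorified projector inserted in $L^n_J$. One has to control the structure maps $f_i$ carefully — in each fixed homological degree $i$ and $q$-degree $j$ the groups $C_{k_i}$ eventually stabilize (the "fixed blocks separating" phenomenon of Champanerkar–Kofman, now categorified), so the colimit is computed in each bidegree by a finite stage; but one must verify that the homotopy equivalences at finite stages are compatible with the $f_i$ so they assemble into an equivalence of colimits. A secondary, more bookkeeping-level obstacle is sign and grading normalization: the half-twist adds a shift, and one must confirm the shifts telescope correctly so that the limiting complex carries exactly the grading of $\langle L^n_J \rangle$'s categorification rather than a shifted version. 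Everything else reduces to the exactness of filtered colimits and the formal properties of the cobordism category, which I would treat as routine given the earlier sections.
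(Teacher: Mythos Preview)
Your proposal is correct and follows essentially the same approach as the paper: work at the chain level, replace each $n$-cabled twist region in the direct limit by a categorified $2n$-projector, and conclude. The paper carries this out one twist region at a time and resolves your ``main obstacle'' in Step~(4) by directly citing Theorem~\ref{t.IW} (Islambouli--Willis: any complete semi-infinite positive braid categorifies the projector) together with the projector-absorbing Lemma~\ref{l.projabsorb} to swallow the two ambient $\JW_n$'s into the resulting $\JW_{2n}$; with those two ingredients named, the bookkeeping concerns you flag about grading shifts and compatibility of the structure maps are already handled by Rozansky's framework of Cauchy direct systems (Definitions~\ref{d.direct}--\ref{d.dlimit} and Theorems~\ref{t.limitexists}--\ref{t.uniquelimit}).
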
 

For the precise notion of direct limit in this setting, see Section \ref{ss.stableKhtorus}, where we prove Theorem \ref{t.stability}. If a set of homology groups has a direct limit in our notion, then its graded Euler characteristic also approaches that of the graded Euler characteristic of the homology in the direct limit. Therefore, a consequence of Theorem \ref{t.stability} is that the unreduced $n+1$ colored Jones polynomials of the link $L_k$ approaches that of the Kauffman bracket of $L^n_J$:  $J_{n+1}^{\infty}(L; A):=  \langle L^n_J \rangle$ under twisting, and similarly for the reduced versions. In view of the volume conjecture and Thurston's hyperbolic Dehn surgery theorem, it is natural to ask whether $J_{n+1}^{\infty}(L; A)$ is related to the hyperbolic volume of the fully augmented link $L_{\infty}$. Taking the limit as $n+1\rightarrow \infty$ of $\widehat{J}_{n+1}^{\infty}(L; A)$ at the root $e^{\frac{\pi i}{2N-1}}$ near the root of unity $e^{\frac{\pi i}{2N}}$ considered for the conjecture, we show the following in Section \ref{s.volumeinfo}. 

\begin{restatable}{thm}{tvolume}  \label{t.volume} Let $N = n+1$, and let $L$ be a link with diagram $D$, with $L_{\infty} = L_\infty(D)$ the associated fully augmented link.  Suppose there is a sequence of moves containing only the triangle moves to obtain $L^n_J$ from the $2n$ colored theta graph $\vcenter{\hbox{\def \svgwidth{.04\columnwidth} \includegraphics[scale=.1]{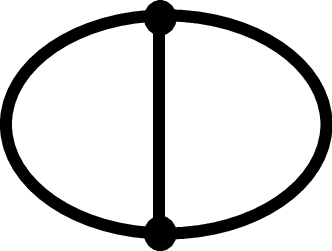}}}$.  Let $T$ be the number of triangular KTG moves in such a sequence. Then
\[2\pi \lim_{N\rightarrow \infty} \frac{\log |\widehat{J}_N^{\infty}(L; e^{\frac{\pi i}{2N-1}})|}{2N-1}   = 2Tv_8, \]
where $v_8$ is the volume of a regular ideal octahedron. 
\end{restatable}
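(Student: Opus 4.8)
The plan is to combine the reduction furnished by Theorem~\ref{t.stability} with the knotted trivalent graph (KTG) calculus and a saddle-point analysis of the regular $6j$-symbol $\sixj$ at the root of unity $A=e^{\pi i/2n}$. By Theorem~\ref{t.stability}, and since a convergent direct system of homology groups has convergent graded Euler characteristics, the reduced quantity in question is the reduced Kauffman bracket of the limiting skein element,
\[
\widehat{J}^{\infty}_n(L;A)\;=\;\langle L^n_J\rangle ,
\]
which we regard (in the normalization fixed in Definition~\ref{d.limitingskein}) as the invariant of a knotted trivalent graph all of whose edges are colored $n$. The first step is to unwind the hypothesis of the theorem: $L^n_J$ is produced from the theta graph by a sequence of exactly $T$ triangle KTG moves, and the colored Kauffman bracket is functorial under such moves --- a triangle move multiplies the invariant by a fixed ratio of a tetrahedral symbol $\sixj$ to theta symbols $\langle\theta_n\rangle$ (this is the recoupling / shadow-state content underlying van der Veen's computation~\cite{vdV2009}). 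Hence
\[
\langle L^n_J\rangle \;=\; \pm\,\langle\theta_n\rangle^{\,\epsilon}\,\prod_{i=1}^{T}\frac{\sixj}{\langle\theta_n\rangle^{\,c}} ,
\]
with $c,\epsilon$ depending only on the combinatorics of the move, not on $n$.

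The second step is the asymptotic input at $A=e^{\pi i/2n}$. The key estimate is that the regular tetrahedral symbol carries exponential growth rate $2v_8$,
\[
2\pi\,\lim_{n\to\infty}\frac{\log\bigl|\sixj\bigr|}{n}\;=\;2v_8 .
\]
This is essentially the estimate of van der Veen (after Costantino): one writes $\sixj$ as an alternating sum of ratios of quantum factorials, passes to the integral / quantum-dilogarithm presentation, locates the dominant critical point, and identifies the extremal value of the associated volume functional with $2v_8$ --- the regular ideal octahedron appearing because at the critical root of unity $e^{\pi i/2n}$, with every color equal to $n$, the relevant angle structure is the right-angled one. Combining this with the (subexponential, or mutually cancelling) behavior of the $\langle\theta_n\rangle$-factors shows that each of the $T$ triangle moves contributes exactly $2v_8$ to the exponential growth rate while the remaining factors contribute $0$; taking $\log|\cdot|$ of the displayed product, dividing by $n$, multiplying by $2\pi$, and letting $n\to\infty$ gives $2\pi\lim_{n\to\infty} n^{-1}\log\bigl|\widehat{J}^{\infty}_n(L;e^{\pi i/2n})\bigr| = 2Tv_8$.

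The main obstacle lies entirely in the second step, and it is twofold. The substantive analytic difficulty is the rigorous asymptotic evaluation of $\sixj$ at $A=e^{\pi i/2n}$: one must control the alternating sum so that no cancellation lowers the rate, and carry out the saddle-point analysis honestly rather than as a formal Gaussian heuristic --- this is precisely where the octahedral volume $v_8$ and the numerical factor $2$ must be pinned down. The secondary, still essential, point is the bookkeeping in the first step: one must verify that the chosen sequence of triangle moves keeps all colors equal to $n$ (so that every symbol produced is the regular $\sixj$ rather than a mixed $6j$-symbol), that it yields a clean product with exactly $T$ tetrahedral factors, and that the accompanying $\langle\theta_n\rangle$- and quantum-integer factors --- which at this root of unity are themselves exponentially small or large --- do not perturb the net rate $2Tv_8$. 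The remaining ingredients (the reduction via Theorem~\ref{t.stability}, the passage from homology to Euler characteristic, and the passage between $J_n$ and $\widehat{J}_n$) are formal.
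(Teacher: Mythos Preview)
Your approach is essentially the paper's: decompose $\langle L^n_J\rangle$ via the $T$ triangle moves into a product of regular $6j$-symbols over theta symbols, then feed in the known asymptotics. A few points where you make life harder than necessary.

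First, the appeal to Theorem~\ref{t.stability} is irrelevant here: $\widehat{J}_n^{\infty}(L;A)$ is \emph{defined} as $\langle L^n_J\rangle/\langle \jwc_n\rangle$, so no passage through homology or Euler characteristics is needed.

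Second, what you flag as the ``main obstacle'' --- the rigorous asymptotic of the regular $6j$-symbol --- is not something to be redone by saddle-point analysis; it is Costantino's theorem, which the paper simply cites. Likewise the theta bookkeeping is not a matter of vague ``subexponential or mutually cancelling'' behavior: van der Veen computes that $\theta(n,n,n)/O(n)$ evaluated at $e^{\pi i/2n}$ equals $1$ exactly. With these two inputs in hand, the proof is a two-line computation rather than an analytic undertaking.

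Third, your product formula with unspecified exponents $c,\epsilon$ is looser than needed. Because the hypothesis says \emph{only} triangle moves are used, every edge of $L^n_J$ is colored $n$, each of the $T$ moves contributes exactly one factor $\sixj/\theta(n,n,n)$, and the normalization of the starting theta graph contributes the single factor $\theta(n,n,n)/O(n)$. So
\[
\widehat{J}_n^{\infty}(L;A)\;=\;\left(\frac{\sixj}{\theta(n,n,n)}\right)^{\!T}\frac{\theta(n,n,n)}{O(n)},
\]
and the result follows immediately from the two cited evaluations.
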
 

If one obtains the theta graph without applying any triangle moves, as in the case of the standard diagram of the 3-tangle pretzel knots, then the limit on the left hand side is 0. Here KTG stands for Knotted Trivalent Graphs, which appear in evaluations of the colored Jones polynomials using graphical skein-theoretic calculus \cite{MV94}, \cite{Lickorish}, see Definition \ref{d.ktg}.  Theorem \ref{t.volume} can be compared with van der Veen's result \cite{vdV2009} on a version of the volume conjecture for augmented knotted trivalent graphs, which includes many fully augmented links as a special case. Theorem \ref{t.volume} may also be of interest to a generalized version of the volume conjecture by Gukov \cite{Gukov} that predicts the limit of the colored Jones polynomial near the original root of unity captures the volume of noncomplete hyperbolic structures on $S^3\setminus L$ for $L$ a knot.  Note not every link, for example the 4-tangle pretzel link with the standard diagram, satisfies the conditions of Theorem \ref{t.volume}, which require that there exists a sequence of moves containing only triangle moves to obtain $L^n_J$ from the theta graph. An example of a link $L$ satisfying the conditions of Theorem \ref{t.volume} is shown in Figure \ref{f.link}. 

\begin{figure}[H] 

\includegraphics[scale=.3]{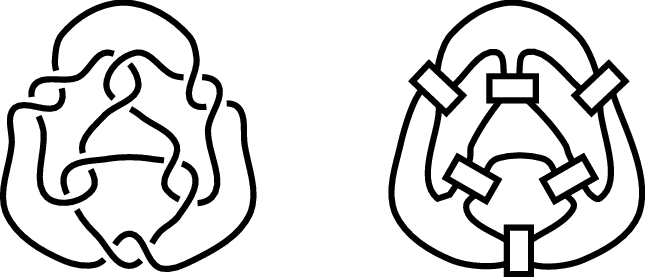}
\caption{\label{f.link} A link $L$ and $L^1_J$ satisfying the conditions of Theorem \ref{t.volume}.}
\end{figure} 

Recall $t$ is the number of twist regions  of a diagram $D$ of $L$, and the number of crossing circles in the associated fully augmented link $L_{\infty} = L_{\infty}(D)$. Since the volume of the fully augmented link $L_{\infty}$ is at least $2(t-1)v_8$ \cite[Proposition 3.1]{FKP}, ie,  $vol(S^3 \setminus L_{\infty}) \geq 2(t-1)v_8$, the asymptotics of the limiting skein element $L^n_J$ we consider appears to only capture partial volume information. This indicates a potential discrepancy between categorical convergence and analytical convergence, which we plan to study in a future project.

\subsection*{Acknowledgments}
I would like to thank Neil Hoffman for the conversation that inspired this paper.  I am also grateful to Roland van der Veen for illuminating conversations about his work, and I would like to acknolwedge the partial support by NSF grant No. 2244923.

\section{Background}

\subsection{The colored Jones polynomials and the Temperley-Lieb algebra}
Fix $n\geq 1$. The $n$ Temperley-Lieb algebra, denoted by $TL_n$, is the $\mathbb{C}$-vector space generated by properly embedded tangle and link diagrams in a disk $D^2$ viewed as a square, with $n$ marked points on the top boundary and the bottom boundary. The diagrams are considered up to isotopy rel the boundary and the Kauffman bracket skein relations:  
\begin{equation} \label{e.kbsr} \kbsrc = A \ \kbsrv  + A^{-1}  \ \kbsrh\text{, and }\kcircle  = -A^2 -  A^{-2}. \end{equation}  
We refer to an element in $TL_n$ as a \textit{skein element}. The natural multiplication operation $T_1 \cdot T_2$ between two skein elements $T_1$ and $T_2$ in $TL_n$ identifies the $n$ points on the bottom boundary of $T_1$ with $n$ points on the top boundary of $T_2$, and turns $TL_n$ into an algebra generated by $\{e_i(n)\}_{i=1}^{n-1}$: 
\begin{center} 
$ e_i(n) =  \vcenter{\hbox{\includegraphics[scale=.5]{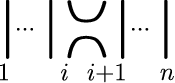}}} $
\end{center}   

The $n$ Jones-Wenzl projector is an element in $TL_n$ defined as follows.  

\begin{defn}\cite[Lemma 13.2]{Lickorish}\label{d.jw}
Let $A$ be an indeterminate and $|_n$ denote $n$ parallel strands. There is a unique skein element $\jw_n$ in $TL_n$, called the \textit{$n$ Jones-Wenzl projector}, such that
\begin{enumerate}[(i)]
\item $\jw_n \cdot e_i(n)  = 0 =  e_i(n) \cdot \jw_n $, 
\item $\jw_n - |_n$ belongs to the algebra generated by $\{e_i(n) \}_{i=1}^{n-1}$, 
\item $\jw_n \cdot \jw_n = \jw_n$, and 
\item  $\langle \kcircle_n \rangle = (-1)^n \frac{A^{2(n+1)} - A^{-2(n+1)}}{A^2 - A^{-2}}$. 
\end{enumerate}
\end{defn}

\noindent From the defining properties we obtain the ``projector-absorbing" property shown in Figure \ref{f.projabsorb}. 
\begin{figure}[H]
\includegraphics[scale=.7]{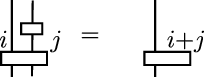} 
\caption{\label{f.projabsorb} A larger projector absorbs a smaller projector.}
\end{figure} 

We similarly consider the Kauffman bracket skein module of isotopy classes of link diagrams in the 2-sphere $S^2$  quotiented by the Kauffman bracket skein relations \eqref{e.kbsr}. 
Let $L$ be a skein element with empty boundary in $S^2$, i.e., a link diagram. The Kauffman bracket $\langle L \rangle$ is the rational function multiplying the empty diagram after reducing $L$ to the empty diagram using the Kauffman bracket skein relations. 

For two tangles $T_1 \subset D^2_1$ and $T_2 \subset D^2_2$ in $TL_n$ arranged as in Figure \ref{f.join}, we  define the join $T_1 \star T_2$ in the plane which joins the top $n$ points on $D^2_1$ to the top $n$ points on $D^2_2$, and joins the bottom $n$ points on $D^2_1$ to the bottom $n$ points on $D^2_2$ by $n$ parallel arcs in the plane. 

\begin{figure}[H]
\includegraphics[scale=.7]{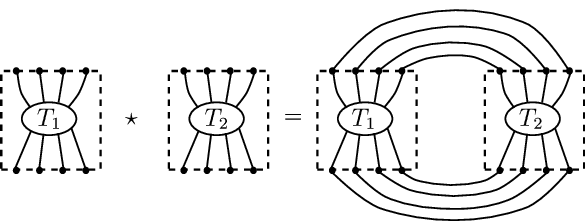} 
\caption{\label{f.join} Joining two tangles $T_1$ and $T_2$ in $TL_n$. Here $n=4$.}
\end{figure}

Let $D$ be the diagram of a link $L$ with $\ell$ components. For $n\geq 1$ we define the reduced $n+1$ colored Jones polynomial $\widehat{J}_{n+1}(L; A)$. 
For each component remove the intersection with a disk intersecting the diagram and the component in a simple arc, then take the $n$-blackboard cable of the remaining tangle, call this $\cup_{i=1}^\ell \mathbb{T}_i^n$. Let $\mathbb{D}^n =  \cup_{i=1}^\ell (\mathbb{T}_i^n \star \jw_n) $ be the skein element resulting from joining the $n$ parallel strands from each $n$-cabled link component with an $n$ Jones-Wenzl projector. 

\begin{defn} \label{d.cjp} Let $L$ be an oriented link with diagram $D$ and let $w(D)$ denote its writhe. 
The (reduced) \textit{$n+1$ colored Jones polynomial} $\widehat{J}_{n+1}(L; A)$ of a link $L$ is defined by
\begin{equation} \label{e.cjp}  \widehat{J}_{n+1}(L; A) := \left((-1)^nA^{(n^2+2n)} \right)^{-w(D)} \langle \mathbb{D}^n \rangle / \langle \jwc_n \rangle. \end{equation}
The \textit{unreduced $n$ colored Jones polynomial} is obtained from the reduced version by multiplying back the Kauffman bracket of the $n$ colored unknot: 
\[ J_n(L; A) = \widehat{J}_n(L; A) \cdot \langle \jwc_n \rangle. \]   
\end{defn}

\paragraph{\textbf{Note on convention.}}
With our normalization, we have $\widehat{J}_{n+1}(U; A) = 1$ for the unknot $U$. Note $n$ denotes the color on the strand and $N = n+1$ denotes the dimension of the corresponding irreducible representation, as is done in \cite{vdV2009}. 

\subsection{Khovanov homology}
We use Bar-Natan's formulation of Khovanov homology in \cite{BN} and loosely follow \cite{CK} in notation. Let $Cob(n)$ be the additive category defined by: \\
$\bullet$ Objects: Isotopy classes of formally $q$-graded $TL_n$ skein elements. Here $-q =  A^{- 2}$. \\
$\bullet$ Morphisms: Elements in the free $\mathbb{Z}$-module spanned by isotopy classes of orientable surface cobordisms between two $q$-graded $TL_n$ skein elements, quotiented by the Bar-Natan skein relations depicted in Figure \ref{f.cobrelations} , where a dot on a cobordism represents a handle.  

\begin{figure}[H]
\includegraphics[scale=.7]{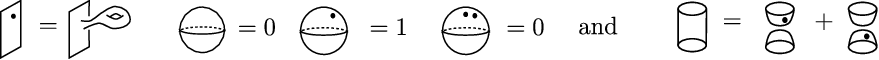}
\caption{\label{f.cobrelations} Relations on cobordisms in $Cob(n)$.}
\end{figure} 

\begin{defn} Define $Kom^+(n)$ to be the category of semi-infinite positive chain complexes of objects and morphisms in $Cob(n)$. That is, we allow chain complexes which may be unbounded in positive homological degrees. 
\end{defn} 

Allowing chain complexes which may be unbounded in positive homological degrees  is not used in the definition of Khovanov homology, but will be relevant in the definition of colored Khovanov homology in the next section.

Given an oriented link or tangle diagram $D$, form the underlying Khovanov  complex $\{ \CKh_{i, *}(D)\}$ using the categorified versions of the skein relations in Figure \ref{f.skeincat}: 
\begin{figure}[H]
\includegraphics[scale=.7]{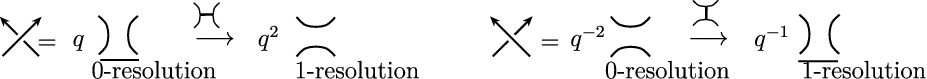}
\caption{\label{f.skeincat} Categorified skein relations.}
\end{figure} 
The underlined diagrams in Figure \ref{f.skeincat} have homological degree $i = 0$ and the homological grading extends to the rest of the diagram where a resolution is chosen at every crossing. The quantum grading $j$ is given by the degree of the monomial multiplying the diagram in $q$. Here  $\vcenter{\hbox{\includegraphics[scale=.7]{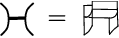}}}$, the saddle cobordism from $\kbsrv$ to $\kbsrh$, reading from bottom to top. 

The skein  relation in terms of $q$ comes from the Kauffman bracket skein relation in $A$ by factoring $A$ out of \eqref{e.kbsr}. The variable $A^{-2}$ is then replaced by $-q$ for the new skein relation. 
\[ \kbsrc = A\left(\ \kbsrv  + A^{-2}  \ \kbsrh \right) = A\left(\ \kbsrv  - q  \ \kbsrh \right) \text{, and }\kcircle  = (q+q^{-1}). \] 

The variable $A$ that is factored out is combined with the writhe term multiplying the Kauffman bracket in \eqref{e.cjp} and written in terms of $q$. This leads to the $q$ coefficients in the categorified skein relations of Figure \ref{f.skeincat}.

Formally, let $c$ be the number of crossings in $D$, and pick an ordering of the crossings by numbering them $1, \ldots, c$. First we form $\{\CKh_{i, j}(D)\}$. To a bit string $s$ in $\{0, 1\}^c$ with $s(k)$ the $k$th digit of the string $s$, associate a diagram $D_{s}$ consisting of a disjoint collection of circles and arcs in the plane, resulting from applying the Kauffman state $s$ that chooses the $s(k)$-resolution at the $k$th crossing for $1\leq k\leq c$.  Multiply $D_{s}$ by $q$ raised to the appropriate power for each crossing following Figure \ref{f.skeincat} and still denote the resulted shifted complex by $D_{s}$. Denote the homological grading of $D_s$ by $h(D_s)$, then define $\CKh_{i, *}(D) := \oplus_{h(D_s) = i} D_s$. 
 
For the differential, let $\widetilde{d}_k$ be the saddle cobordism map that maps the $0$-resolution to the $1$-resolution at the $k$th crossing, and let $g_s(k)$ be the number of 1's in the string $s$ before $s(k)$. Define 
\[ \widetilde{d}(D_s) = \sum_{k: s(k) = 0} (-1)^{g_s(k)} D_{\widetilde{d}_k(s)}.\]

Extend $\widetilde{d}$ linearly over $\CKh_{i, *}(D) = \oplus_{h(D_s) = i} D_s$ for each homological grading $i$. 
If $D$ is a tangle diagram, the homotopy type of the chain complex $\{\CKh_{i, j}(D), \widetilde{d}\}$ is an invariant of the isotopy class of the tangle \cite{BN}. If $D$ is a link diagram, then $D_s$ for every $s$ is a disjoint collection of only circles without arcs, and we compose $\widetilde{d}$ with the delooping map $p$ to send $D_s$ to a $\mathbb{Z}$-module $V_s$. 

\begin{figure}[H]
\includegraphics[scale=.7]{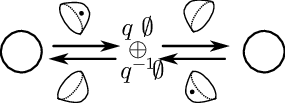}
\caption{\label{f.deloop} Delooping maps}
\end{figure} 

Let $|D_s|$ denote the number of circles in $D_s$, then $p(D_s) = V_s = (q\mathbb{Z} \oplus q^{-1} \mathbb{Z}) \otimes \cdots \otimes (q\mathbb{Z} \oplus q^{-1} \mathbb{Z}) = (q\mathbb{Z} \oplus q^{-1} \mathbb{Z})^{\otimes |D_s|} $.
Let $v\in V_s$ and extend $p, p^{-1}$ linearly over $V_s$, then $\widetilde{d}$ is extended to a map $d$: 
 \[ d(v) = \sum_{k: s(k) = 0} (-1)^{g_s(k)} p(p^{-1}(v)_{\widetilde{d}_k(s)}).\]

If $D$ is the diagram of a link $L$, define $\{ \CKh_{i, *}(D)  := \oplus_{h(D_s) = i} V_s \}$ and let the quantum grading $j = q(v)$ of an element $v$ be the power of the $q$-coefficient multiplying $v$. The map $d$ preserves the quantum grading while increasing the homological grading by $1$. Extend $d$ linearly over $\CKh_{i, *}(D) = \oplus_{h(D_s) = i} V_s$. The set of bigraded homology groups $\{ \Kh_{i, j}(D) \}$ of the chain complex $\{ \CKh_{i, j}(D), d \}$ is an invariant of $L$. The decategorifcation of $\Kh(D)$ by taking the graded Euler characteristic recovers the (unreduced) Jones polynomial $J(L; A)$ of $L$: 
\[ J_2(L; A) = J(L; A) = \sum_{i, j} (-1)^i \left( q^j rank(\Kh_{i, j}(D)) \rvert_{-q = A^{-2}} \right) .   \] 

Up to a degree shift by the writhe of the oriented diagram $D$, the chain complex $\{\CKh_{i, j}(D), d\}$ also gives a categorification of the Kauffman bracket of $D$. 

\subsection{Colored Khovanov homology}
Let $D$ be a diagram of a link $L$ and consider the skein element $\mathbb{D}^n = \cup_{i=1}^\ell (\mathbb{T}_i^n \star \jw_n) $ as in Definition \ref{d.cjp} for the colored Jones polynomials. 
For each $n\geq 1$, the chain complex categorifying the $n+1$ colored Jones polynomial of the link $L$, denoted by $\CKh(L, n) = \{\CKh_{i, j}(D, n) , d \} $, is constructed by tensoring the Khovanov chain complex $\{ \CKh(\cup_{i=1}^\ell \mathbb{T}_i^n), \widetilde{d} \}$ of the tangle $\cup_{i=1}^\ell \mathbb{T}_i^n$, with the categorification of the Jones-Wenzl projector, see \cite{Roz12}, \cite{CK}, denoted by $\JW_n$. For each $n\geq 1$, the resulting set of bigraded $n$ colored Khovanov homology groups $\{ \Kh_{i, j}(L, n) \}$ recovers the $n+1$ colored Jones polynomial after decategorification. 
\[  J_{n+1}(L; A) = \sum_{i, j} (-1)^i \left( q^j rank(\Kh_{i, j}(L, n)) \rvert_{-q = A^{-2}} \right) .   \]

In this paper, we use Rozansky's categorification of the $n$ Jones-Wenzl projector \cite{Roz12}. Since the categorification by Rozansky satisfies categorified versions of the defining properties Definition \ref{d.jw} (i) - (iv) of the Jones-Wenzl projector, it is homotopy equivalent to the categorification defined by Cooper-Krushkal \cite{CK}. See  \cite[Corollary 3.5]{CK} for the theorem that implies this statement.

We use the categorified version of the projector-absorbing property (Figure \ref{f.projabsorb}). 

\begin{lem}{(\cite[Proposition 3.6]{Roz14})} \label{l.projabsorb}
Let $\JW_j$ denote the categorification of the $j$ Jones-Wenzl projector, and let $\JW_j \cdot \JW_{i+j}$ denote the complex obtained by tensoring which categorifies $\jw_j \cdot \jw_{i+j}$. Then $\JW_j \cdot \JW_{i+j}$ is homotopy-equivalent to $\JW_{i+j}$. That is, 
$\JW_j \cdot \JW_{i+j} \simeq \JW_{i+j}$. 

\end{lem}

\section{A quantum Dehn surgery theorem}
\subsection{The stable Khovanov homology of infinite torus braids} \label{ss.stableKhtorus}

We follow Rozansky \cite{Roz12} and Islambouli-Willis \cite{IW} in the description of the stable Khovanov homology of infinite torus braids, see \cite{GOR} for further references. 
\begin{defn} 
We work in the homotopy category of complexes in \\$Kom^+(n)$. For $m\geq 0$ let $O^h_+(m)$ denote a complex which starts at the $m$th homological degree: 
\[ O^h_+(m) = (A_{m} \to A_{m+1} \to \cdots).  \] 
The \textit{homological order} of a complex $A$, denoted by $|A|_h$, is 
\[  |A|_h = \inf \{m: A\simeq O^h_+(m) \}. \] 
\end{defn} 

\begin{rem}
We are working with dual versions of the complexes of \cite{Roz12, Roz14}. To obtain the complex in \cite{Roz12}, change every homological degree to its negative and reverse the direction of every arrow. For example, the complex $O^h_-(m)$ from \cite{Roz12} which ends at the $-m$th homological degree is given by dualizing $O^h_+(m)$. 
\[ O^h_-(m) = (\cdots \to A_{-(m+1)} \to A_{-m}).  \] 
\end{rem} 

A \textit{direct system} is a sequence of complexes $\{A^i\}$ connected by chain morphisms 
$\mathcal{A} =  A^0 \stackrel{f^0}{\to} A^1 \stackrel{f^1}{\to}\cdots$. Rozansky defines a direct system to be \textit{Cauchy} if $ \displaystyle \lim_{i\rightarrow \infty}  |Cone(f^i)|_h = \infty$.

\begin{defn} \label{d.dlimit} 
 A direct system has a limit: $\varinjlim \dsys = A$, where $A$ is a chain complex, if there exist chain morphisms $A^i \stackrel{\widetilde{f}^i}{\to} A$ such that they form commutative triangles
\begin{center} 
\begin{tikzcd}								
  A^i \arrow[urrd, bend left, "\widetilde{f^i}"]  \arrow[r, "f^i"']   & A^{i+1} \arrow[r, "\widetilde{f}^{i+1}"']  & A  
\end{tikzcd}
\end{center} 

where $\widetilde{f^i} \sim \widetilde{f}^{i+1}\circ f^i$ and $\displaystyle \lim_{i\to \infty} | Cone(\widetilde{f}^i)|_h = \infty$.   \end{defn} 

Rozansky showed that a direct system has a limit if and only if it is Cauchy \cite[Theorem 2.5]{Roz12} and if a direct system has a limit, then it is unique \cite[Theorem 2.6]{Roz12}. In \cite{Roz12}, he defines a direct system of shifted Khovanov complexes of torus braids on $n$ strands connected by chain morphisms, which he shows to be Cauchy. Therefore, the direct system has a unique limit  $\JW_n$, called \textit{the stable homology group of infinite torus braids on $n$ strands}. His results generalize those of Sto\v{s}i\'{c}'s on the stable Khovanov homology of infinite torus links \cite{stosic}. He also shows $\JW_n$ categorifies the $n$ Jones-Wenzl projector $\jw_n$. 

In \cite{IW},  Islambouli and Willis showed that the Khovanov complexes of infinite torus braids considered above can be replaced by the Khovanov complexes of any complete semi-infinite positive braids in the construction to give a categorification of the $n$ Jones-Wenzl projector.

\begin{defn}
A \textit{semi-infinite positive} braid $\beta$ on $n$ strands is a semi-infinite word in the standard generators $\sigma_i$ of the braid group on $n$ strands:  $ \beta = \sigma_{j_1} \sigma_{j_2} \cdots$. A semi-infinite positive braid $\beta$ is \textit{complete} if $\sigma_i$ for each $1\leq i \leq n-1$ occurs infinitely often in $\beta$. 
\end{defn}

\begin{thm}{(\cite[Theorem 1.1]{IW})} \label{t.IW} Let $\beta$ be any complete semi-infinite positive braid on $n$ strands, viewed as the limit of positive braid words $\beta = \varinjlim \sigma_{j_1} \sigma_{j_2} \cdots \sigma_{j_\ell}$. 
Then the limiting Khovanov chain complex categorifies $\jw_n$. 
\end{thm}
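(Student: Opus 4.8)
\textbf{Proof proposal for Theorem \ref{t.IW} (\cite[Theorem 1.1]{IW}).}

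The plan is to reduce the statement about an arbitrary complete semi-infinite positive braid $\beta$ to Rozansky's already-established result that the direct limit of the shifted Khovanov complexes of the torus braids $(\sigma_1 \sigma_2 \cdots \sigma_{n-1})^\ell$ is the projector $\JW_n$. The key structural observation is that, once $\ell$ is large enough that the finite subword $\sigma_{j_1} \cdots \sigma_{j_\ell}$ contains each generator $\sigma_i$ many times, the corresponding Khovanov complex is already very close—in the sense of homological order—to $\JW_n$. Concretely, I would first recall the projector-absorption and ``far commutation" estimates from Rozansky's work: if a positive braid word $w$ contains a full twist $\Delta^2$ (or more generally $k$ disjoint copies of $\Delta^2$), then $\CKh(w)$ is homotopy equivalent to a complex whose truncation below homological degree roughly $2k$ (after the appropriate shift) agrees with the truncation of $\JW_n$, i.e. the cone of the natural map $\CKh(w) \to \JW_n$ has homological order $\to \infty$ as $k \to \infty$. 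This is the content of Rozansky's ``Cauchy" argument applied to torus braids, and the only real work is to check it is insensitive to \emph{which} positive word realizes the full twists.

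Second, I would make the comparison between the torus-braid system and the $\beta$-system precise. Fix the torus braid direct system $T_1 \to T_2 \to \cdots$ with $T_m = \CKh((\sigma_1\cdots\sigma_{n-1})^m)$ (suitably shifted), whose limit is $\JW_n$. Given $\beta = \varinjlim w_\ell$ with $w_\ell = \sigma_{j_1}\cdots\sigma_{j_\ell}$, completeness guarantees that for every $m$ there is an $\ell(m)$ such that $w_{\ell(m)}$ contains $m$ disjoint full twists as a positive subword (one can extract them greedily: each time all $n-1$ generators have appeared since the last extraction, peel off a $\Delta^2$ using positive-braid combinatorics and the fact that a positive word in which every generator occurs sufficiently often is Markov/braid-equivalent, in fact positively equivalent after inserting the needed letters, to one containing $\Delta^2$ as an explicit subword). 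Conversely each $w_\ell$ sits inside some $T_m$ after adding finitely many positive letters. The point is that adding positive crossings induces a chain map (composition with the corresponding positive Reidemeister-II / saddle cobordisms), so one gets interleaving chain morphisms $T_{a(\ell)} \to \CKh(w_\ell) \to T_{b(\ell)}$ compatible up to homotopy with the structure maps on both sides, and with $a(\ell), b(\ell) \to \infty$.

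Third, I would invoke the general nonsense about direct limits from Definitions \ref{d.direct}--\ref{d.dlimit} and Theorems \ref{t.limitexists}, \ref{t.uniquelimit}. An interleaving of two direct systems by chain morphisms that are compatible up to homotopy forces the two systems to have the same limit (when one of them has a limit): the cones of the structure maps in the $\beta$-system are controlled above and below by cones in the torus system via the octahedral axiom, so $|Cone(f_\ell^\beta)|_h \to \infty$, hence the $\beta$-system is Cauchy and by Theorem \ref{t.limitexists} has a limit; the interleaving maps then identify that limit with $\varinjlim T_m = \JW_n$ up to homotopy equivalence by Theorem \ref{t.uniquelimit}. Finally one checks the limiting complex satisfies the categorified defining properties (i)--(iv) of Definition \ref{d.jw}: property (i), killing of turnbacks, follows because each $w_\ell$ eventually absorbs any $e_i$ (Lemma \ref{l.projabsorb}-type absorption, using that $\sigma_i$ appears arbitrarily often), and this passes to the limit; idempotency and the normalization are inherited from $\JW_n$.

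The main obstacle I expect is the combinatorial extraction step: showing that completeness of $\beta$ really does let one find, inside each finite truncation $w_\ell$, a large number of disjoint full twists \emph{as honest positive subwords} realizing chain maps to and from the torus-braid complexes, with the homological order of the relevant cones growing. Getting the quantitative statement ``$m$ disjoint $\Delta^2$'s $\Rightarrow$ agreement with $\JW_n$ below degree $\sim 2m$" requires carefully tracking Rozansky's shift conventions and the far-commutation homotopies, and ensuring the maps one writes down are genuinely compatible with both direct systems up to coherent homotopy rather than merely degreewise—this coherence is what makes the limit identification legitimate. Everything else is either Rozansky's theorem applied verbatim or formal manipulation of Cauchy direct systems.
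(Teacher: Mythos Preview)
The paper does not give its own proof of this theorem: it is quoted verbatim as \cite[Theorem 1.1]{IW} and used as a black box in the proof of Theorem \ref{t.stability}. There is therefore no ``paper's proof'' to compare your proposal against.

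That said, your sketch is a plausible outline of a reduction to Rozansky's torus-braid theorem, and the interleaving-of-direct-systems strategy is the natural one. The point you correctly flag as the main obstacle---extracting many disjoint full twists $\Delta^2$ as honest positive subwords of $w_\ell$ and getting coherent chain maps from this---is genuinely the delicate part, and your parenthetical (``positively equivalent after inserting the needed letters'') is doing a lot of unjustified work: merely having each $\sigma_i$ appear often does not obviously yield $\Delta^2$ as a \emph{contiguous} subword, and the maps you want come from stacking on extra crossings, not from rewriting. The actual argument in \cite{IW} is more direct: rather than extracting full twists, one shows that the cone of the chain map induced by stacking a single $\sigma_i$ onto a positive word $w$ has homological order bounded below by (roughly) the minimum over $j$ of the number of occurrences of $\sigma_j$ in $w$; completeness then forces these cones to go to infinity, so the system is Cauchy, and a single comparison map to $\JW_n$ identifies the limit. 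Your approach would work if the extraction step were made honest, but it is more circuitous than necessary.
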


We are now ready to prove Theorem \ref{t.stability}.
\subsection{Proof of Theorem \ref{t.stability}} 
 We restate Theorem \ref{t.stability} for the convenience of the reader. Recall $L$ is a link  with diagram $D$ with $t$ twist regions,  and $L_{\infty}(D)$ denotes the corresponding fully-augmented link, where there is a crossing circle $C_i$ for each twist region for $1\leq i \leq t$. Let $L_k = L_{k_1, \ldots, k_t} = L_k(D)$ denote the link obtained from $L_{\infty}$ by performing a $-1/k_i$ Dehn surgery on each crossing circle $C_i$ for $1\leq i \leq t$. Let $\Kh(L_k(D), n)$ denote the $n$ colored Khovanov homology of the link. Finally, we define $L^n_J$. 
 
 \begin{defn} \label{d.limitingskein}
Let $L$ be a link with diagram $D$, then the  skein element $L^n_J$ is the diagram  obtained by taking the $n$-blackboard cable of $D$, removing the $(n, n)$-tangle corresponding to each $n$-cabled twist region, and replacing it by joining with a $2n$ Jones-Wenzl projector. 
\end{defn} 

Let $\{\Kh_{i, j}(L_J, n)\}$ denote the categorification of the Kauffman bracket of $L^n_J$. 
 
\tstable*

\begin{proof}
It suffices to show the chain complexes $\left\{ \CKh_{i, j}(L_k, n) \right\}$ form a direct system as $k\rightarrow \infty$ with direct limit given by $\left\{ \CKh_{i, j}(L_J, n) \right\}$. 
That is, we consider the set of chain complexes $\{\CKh(L_k, n) \}= \{\CKh(L_{k_1, k_2, \ldots, k_t}, n)\}$ as $k_1 \rightarrow \infty, k_2 \rightarrow \infty, \ldots, k_t \rightarrow \infty$. Recall $\CKh(L_k, n)$ is obtained by tensoring each component of the Khovanov complex categorifying the $n$-cable of the diagram $D$ with the complex $\JW_n$ of an $n$ Jones-Wenzl projector.  For each $k_i$, $1\leq i \leq t$ starting from $i=1$, the tangle that is the $n$-cabled twist region of $k_i$ full twists is $\beta = ( \sigma_{n} \sigma_{n-1}^2 \cdots \sigma_{2}^{n-1}\sigma_{1}^{n}\sigma_{2}^{n-1} \cdots \sigma^2_{n-1}\sigma_{n})^{k_i}$, a complete semi-infinite positive braid as $k_i \rightarrow \infty$. Therefore, by Theorem \ref{t.IW}, it can be replaced by a  $2n$ Jones-Wenzl projector in the direct limit. This is shown in the first part of Figure \ref{f.dlimitabsorb}. 
\begin{figure}[H]
\includegraphics[scale=.7]{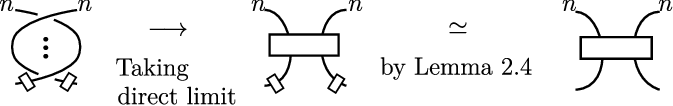}
\caption{\label{f.dlimitabsorb} From left to right: the $n$-cable of a twist region, replacing the twist region by $\JW_n$ in the direct limit, absorbing the two smaller projectors.}
\end{figure} 

Applying the projector-absorbing property, Lemma \ref{l.projabsorb}, to absorb any $n$ projector on a component of $D$, we arrive at the chain complex categorifying the skein element $L^n_J$ obtained by replacing each $n$-cabled twist region by a $2n$ Jones-Wenzl projector. 
\end{proof}

\section{Volume information from the limiting spin network} \label{s.volumeinfo}

In this section, we prove Theorem \ref{t.volume} by evaluating the reduced Jones polynomial of the limiting spin network $L^n_J$ at the root of unity $e^{\frac{\pi i}{2N-1}}$ as $N\rightarrow \infty$. We first need to set up the theory of KTGs (knotted trivalent graphs) and the graphical skein-theoretic calculus on KTGs used in the evaluation of the colored Jones polynomials.   

\subsection{Skein-theoretic moves on Knotted Trivalent Graphs}

The following graphical skein-theoretic formulas are helpful for evaluating the colored Jones polynomials as defined in Definition \ref{d.cjp}. First, we regard a skein element arranged as in the left hand side of Figure \ref{f.triv} as a trivalent vertex: 

\begin{figure}[H]
 \includegraphics[scale=.1]{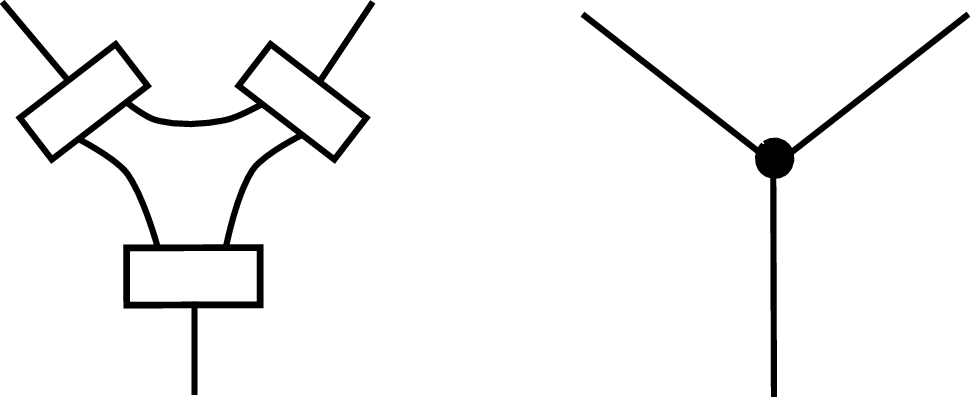}
 \caption{\label{f.triv} A trivalent vertex.}
\end{figure} 

A positive integer $m$ next to a strand indicates $m$ parallel copies (the ``color") of that strand. In $TL_n$, we have the following equivalences from  \cite{MV94}.  

\begin{figure}[H]
\includegraphics[scale=.7]{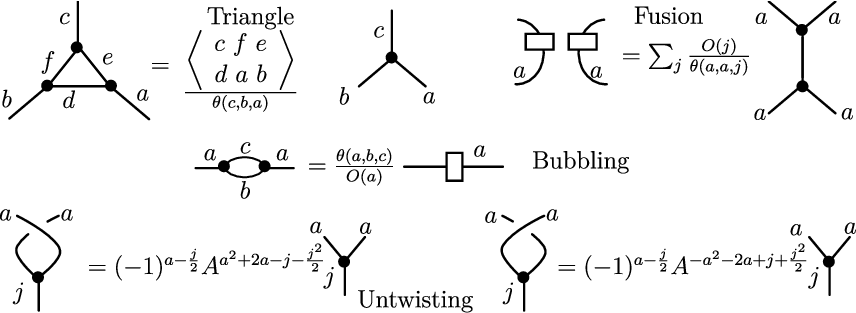}
\caption{\label{f.moves} Skein-theoretic graphical calculus.}
\end{figure}

Here $O(a) = \langle \jwc_a \rangle$.
A triple $(a, b, c)$ of colors is called \textit{admissible} if $a+b+c$ is even and $|a-b|\leq c \leq a+b$. The theta function $\theta(a, b, c) = \langle  
\vcenter{\hbox{
\includegraphics[scale=.5]{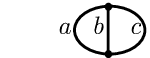}}} \ 
\rangle$ is 0 when $(a, b, c)$ is not admissible. One obtains the $6j$-symbol $\left \{ \begin{array}{ccc} a & b & c \\ d & e & f  \end{array} \right\}$ from the tetrahedron coefficient $\left \langle \begin{array}{ccc} c & b & d \\ f & e & a \end{array} \right \rangle$ by the equation 
$ \left \{ \begin{array}{ccc} a & b & c \\ d & e & f  \end{array} \right\}  = \frac{O(c) \left \langle \begin{array}{ccc} c & b & d \\ f & e & a \end{array} \right \rangle}{\theta(c, a, e) \theta(c, b, d)} $.

Trivalent graphs naturally arise from a crossingless skein element where each Jones-Wenzl projector is in an arrangement as on the left-hand side of the equation in Figure \ref{f.triv}, when we view them as trivalent vertices, and when we view  arcs between projectors as edges of a graph connecting the trivalent vertices. The Kauffman bracket of the skein element $\mathbb{D}^n$ for every link diagram $D$ can be reduced to a sum of Kauffman brackets of skein elements represented by trivalent graphs with Laurent series coefficients as we show in the next section. 
\subsection{The $n$ colored Jones polynomial as a sum of Kauffman brackets of KTGs}
We describe how the fusion and untwisting equivalences of Figure \ref{f.moves} are applied to simplify the Kauffman bracket of a link diagram. An example is illustrated in Figure \ref{f.twostrands}. \\  
(1) Fusion: Replace two strands decorated by Jones-Wenzl projectors of an $n$-cabled twist region by a sum over the  fused strands  multiplied by $\frac{O(n)}{\theta(n, n, j)}$, indexed by the fusion parameter $j$. \\
(2) Untwisting: Replace each $n$-cabled crossing by two strands using the untwisting equivalence and multiply by the corresponding rational function coefficient.  Since the Kauffman bracket extends linearly over a sum of diagrams, each term in the sum is now the Kauffman bracket of a trivalent graph $\langle \Gamma_j \rangle$, indexed by the fusion parameter $j$ for the twist region. 

\begin{figure}[H]
\[ \vcenter{\hbox{ \includegraphics[scale=.5]{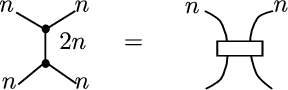}}} = \sum_{0 \leq j \text{ even } \leq 2n} \frac{O(j)}{\theta(n, n, j)} \vcenter{\hbox{ \includegraphics[scale=.7]{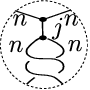}}}  
 = \sum_{0 \leq j \text{ even } \leq 2n} \frac{O(j)}{\theta(n, n, j)} (-1)^{n-\frac{j}{2}} A^{n^2 + 2n - j - \frac{j^2}{2}}  \vcenter{\hbox{ \includegraphics[scale=.7]{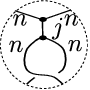}}} \]
\caption{\label{f.twostrands} Applying fusion and untwisting to a twist region.}
\end{figure} 

Recall $t$ is the number of twist regions in a diagram $D$ of a link $L$ numbered from $1, 2,\ldots, t$.  Let $c(i)$ be  the number of crossings in the twist region numbered $i$ for $1\leq i \leq t$, multiplied by a $-1$ if the crossing is positive from applying the untwisting equivalence. From steps (1) and (2) we have 
\begin{equation}\label{fe}
\langle \mathbb{D}^n \rangle =  \sum_{1\leq j_1, j_2, \ldots, j_t \text{ even } \leq 2n} \left(\prod_{i=1}^t  (-1)^{n-\frac{j_i}{2}} A^{c(i)(n^2+2n-j_i-\frac{j_i^2}{2})}  \frac{O(n)}{\theta(n, j_i, n)}\right) \langle \Gamma_{j_1, j_2, \ldots, j_t} \rangle
\end{equation} 
 To further evaluate the Kauffman bracket of the colored trivalent graph $\Gamma$ in equation \eqref{fe}, we work with the theory of Knotted Trivalent Graphs (KTG's for short) \cite{Thurston}.  

\begin{defn} \label{d.ktg} A Knotted Trivalent Graph (KTG) is a trivalent framed graph $\Gamma$ along with a coloring $\sigma: E(\Gamma) \rightarrow \mathbb{N}$ from the set of edges of $\Gamma$ to the natural numbers, considered up to isotopy of the embedding into $\mathbb{R}^3$. 
\end{defn} 

We identify the colored trivalent graph $\Gamma$ considered in skein-theoretic evaluation of the colored Jones polynomials with a KTG by letting the arcs and the trivalent vertices form the 1-dimensional simplicial complex, and with the coloring by integers defining $\sigma: E(\Gamma) \rightarrow \mathbb{N}$. Framed links are special cases of KTGs with no vertices whose graphs are components of the link. We have the following set of KTG moves from the equivalences of Figure \ref{f.moves}: 
\begin{figure}[H]
\includegraphics[scale=.7]{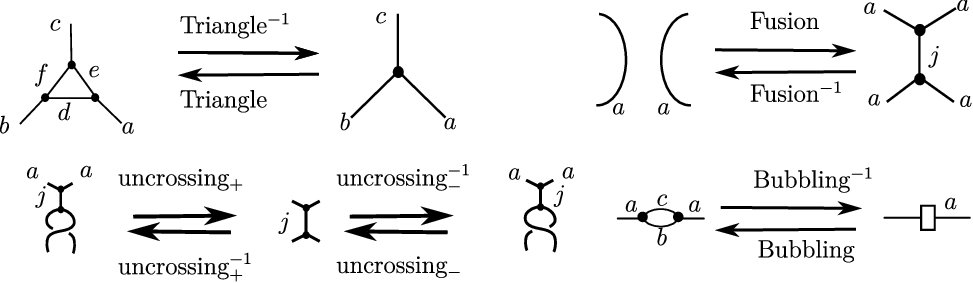} 
\caption{The KTG moves.}
\end{figure} 

\subsection{Converting a crossingless KTG to the theta graph.} \label{ss.convertktg}
Any KTG can be generated from the theta graph using the KTG moves \cite{Thurston}.   In particular, we can reduce a crossingless KTG $\Gamma$ such as the one in Equation \eqref{fe} from evaluating $\langle \mathbb{D}^n \rangle$ to the theta graph as follows.  \\
(1) Let $R$ be a bounded region in the trivalent graph $\Gamma$ with more than three edges on its boundary. Fuse pairs of edges on the boundary by applying the fusion equivalence, and repeat if necessary, to replace $\Gamma$ with a new trivalent graph and the bounded region $R$ with new region(s) $R_1, R_2, \ldots, R_k$, so that each $R_i$ has no more than three edges on its boundary. This is always possible since we can subdivide any polygon into triangles, though at the cost of introducing a sum over new fusion parameters as depicted in Figure \ref{f.moves}. Each term of the sum is now a new trivalent graph $\Gamma'$ whose bounded regions border at most three edges. See Figure \ref{f.4-edged} for an example. 
\begin{figure}[H]
\includegraphics[scale=.7]{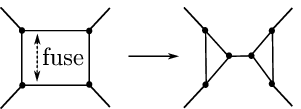} 
\caption{\label{f.4-edged} Fusing an opposite pair of edges reduces a bounded region of 4 sides to two with 3 sides.}
\end{figure} 
\noindent (2) Contract every bounded region (which is now either a triangle or a bigon) in $\Gamma'$ to a vertex by applying the inverses of the Triangle move and the Bubbling move, multiplying by a tetrahedron coefficient over the theta function each time.

\subsection{The limiting skein element $L^n_J$ as a KTG}
In this section we discuss how to obtain $L^n_J=L^n_J(D)$ for any link $L$ with diagram $D$ from the theta KTG by applying KTG moves. We represent $L^n_J$ in terms of a planar weighted graph $G$. 

A \textit{weighted graph} is a graph with a weight on every edge. Let $G$ be a planar $\mathbb{Z}\setminus \{0\}$-weighted  graph. Consider the surface $F_G$ obtained by replacing each vertex of $G$ by a disk and each edge $E$ with weight $w$ by a twisted band of $|w|$ half twists (right-handed if $w > 0$, and left-handed if $w < 0$). Then $\partial(F_G)$ defines a diagram for a link $L$. Conversely, to every link $L$ we can associate a planar, 2-connected, $\mathbb{Z}\setminus \{0\}$-weighted graph $G(L)$, so that $\partial(F_{G(L)})$ is a diagram of $L$. 

For every link $L$ with diagram $D$, recall from Definition \ref{d.cjp} that we can find the $n+1$ colored Jones polynomial of $L$ by evaluating $\langle \mathbb{D}^n \rangle$. Moreover, the evaluation is simplified to that of finding the Kauffman bracket of the theta graph by fusing two $n$ colored strands of an $n$-cabled twist region in $\mathbb{D}^n$, then untwisting the crossings to obtain a new sum (Eq. \eqref{fe}) over trivalent graphs indexed by fusion parameters $j_1, \ldots, j_t$.

 The fusion move introduces a sum over fusion parameters $0\leq j \text{ even } \leq 2n$. Another way of describing the limiting skein element $L^n_J$ (Definition \ref{d.limitingskein}), is that it corresponds to the term in the sum of Equation \eqref{fe} with the fusion parameter equal to $2n$ for each twist region.

 \begin{figure}[H]
 \includegraphics[scale=.7]{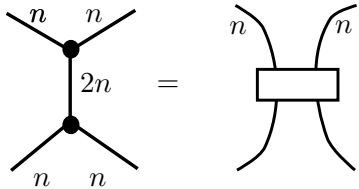}
\caption{Fusion parameter $j=2n$.}
 \end{figure}

\begin{lem} \label{l.genKTG}
Fix an integer $n\geq 1$. Let $S= L^n_J = L^n_J(D)$ be the skein element defined from a diagram $D= \partial(F_{G(L)})$ of $L$ as in Definition \ref{d.limitingskein}, then $S$ can be generated from the theta KTG using a sequence of triangle moves, bubbling moves, and  inverses of the fusion move.  
\end{lem}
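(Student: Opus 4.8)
The plan is to build $L^n_J$ from the theta graph by first realizing the underlying weighted planar graph $G(L)$ combinatorially, then translating the known fact that every KTG is generated from the theta graph into a statement about which moves are actually needed. Recall from Definition \ref{d.limitingskein} that $L^n_J$ is obtained from the $n$-blackboard cable of $D$ by excising each $n$-cabled twist region and closing it off with a $2n$ Jones--Wenzl projector; viewed as a KTG, this is precisely the trivalent graph $\Gamma$ with all fusion parameters set to $2n$ described in the paragraph preceding the lemma, where each former twist region contributes a single edge colored $2n$ joined to the rest of the diagram through two trivalent vertices, and the ``rest of the diagram'' is the $n$-colored cable of $D$ with these edges removed.

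First I would set up the correspondence with the weighted graph $G(L)$: the vertices of $G(L)$ become the ``faces'' (disks) of the surface, the weighted edges become the bands/twist regions, and $\partial(F_{G(L)}) = D(L)$. After cabling and fusing, $L^n_J$ as a KTG has one trivalent vertex at each end of each twist-region edge; contracting along the tree structure of $G(L)$, one sees $L^n_J$ is obtained from a single theta graph by repeatedly performing the inverse fusion move (which splits one edge into two trivalent vertices joined by a parallel pair of edges — exactly what adds a new face/region) and the triangle move (which, via its inverse, contracts a triangular region to a vertex, and forwards, expands a vertex into a triangle). Concretely, I would induct on the number of bounded regions of the planar trivalent graph associated to $L^n_J$: the base case is the theta graph (one bounded region after the appropriate normalization), and the inductive step removes an outermost region by an inverse fusion (to merge a parallel pair of edges) possibly preceded by triangle moves (to bring the region into triangular form, matching step (3)--(4) of the evaluation procedure already described in the excerpt). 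Running this induction backwards gives the desired generating sequence using only triangle moves and inverse fusion moves.

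The key technical point is that one must avoid the uncrossing moves entirely: a priori Thurston's generation result \cite{Thurston} uses triangle, fusion, and uncrossing moves, but $L^n_J$ is a \emph{planar} (crossingless) skein element, since all the crossings of $D$ were absorbed into Jones--Wenzl projectors. So I would argue that for a planar colored trivalent graph, the uncrossing moves are never needed — any planar trivalent graph can be built from the theta graph within the plane using only triangle and fusion moves (and their inverses), because the planar dual of the generation procedure is just the statement that any planar triangulation-like structure is obtained from the simplest one by vertex splits and edge contractions. The fact that we only need triangle moves and \emph{inverse} fusion moves (not forward fusion) comes from orienting the induction so that we only ever simplify $L^n_J$ down to theta, never build complexity via a forward fusion that would later need to be undone.

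The main obstacle I expect is making the inductive step precise: one needs to guarantee that at each stage there is an outermost bounded region that, after a controlled number of triangle moves, can be eliminated by a single inverse fusion without disturbing the already-simplified part of the graph, and that the triangle moves required are genuine triangle moves (admissible triples at each vertex) rather than requiring intermediate non-admissible colorings. This is really a planar-graph-theoretic bookkeeping argument — essentially that the dual graph admits an ordering of its faces so that peeling them off one at a time keeps the remaining graph connected and trivalent — combined with checking admissibility is preserved, which follows because all edge colors in $L^n_J$ are even (being cables and their fusions) so the parity condition $a+b+c$ even is automatic and the triangle inequalities can be arranged by choosing fusion parameters appropriately. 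Once the planarity reduction to ``no uncrossings needed'' is in hand, the rest is a routine induction mirroring steps (3)--(4) of the colored Jones evaluation already laid out above.
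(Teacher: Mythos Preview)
Your approach is essentially the same as the paper's: both arguments exploit that $L^n_J$ is crossingless to avoid uncrossing moves, then reduce $L^n_J$ to the theta graph using fusion moves (to bring regions down to three sides) followed by inverse triangle moves (to contract the triangles), and finally reverse the sequence. The paper organizes this via a two-case split on the valence of vertices in the weighted graph $G(L)$ (valence $3$ already trivalent versus valence $>3$ requiring preliminary fusions), whereas you organize it as an induction on the number of bounded regions---these are equivalent bookkeeping choices, and the paper does not address the admissibility concern you raise any more carefully than you do.
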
 

\begin{proof}

Given $S = L^n_J$ consider the graph $G$ corresponding to $L$. We can obtain $S$ from $G$ by replacing each vertex $v$ of $G$ with valence $|v|$ by a skein element consisting of $|v|$ $2n$ Jones-Wenzl projectors arranged  cyclically $\jw_{2n}^1, \jw_{2n}^2, \ldots, \jw_{2n}^{|v|}$, with $\jw_{2n}^{i}$ joined with $\jw_{2n}^{i-1}$ and $\jw_{2n}^{i+1}$ for $1 < i < |v|$, and $\jw_{2n}^{1}$ joined with $\jw_{2n}^{|v|}$ and $\jw_{2n}^{2}$ as shown in an example in Figure \ref{f.G}.  The valence of a vertex in $G$ corresponds to the number of Jones-Wenzl projectors abutting a bounded region in the complement of $S$ in the plane. 

\begin{figure}[H]
\includegraphics[scale=.2]{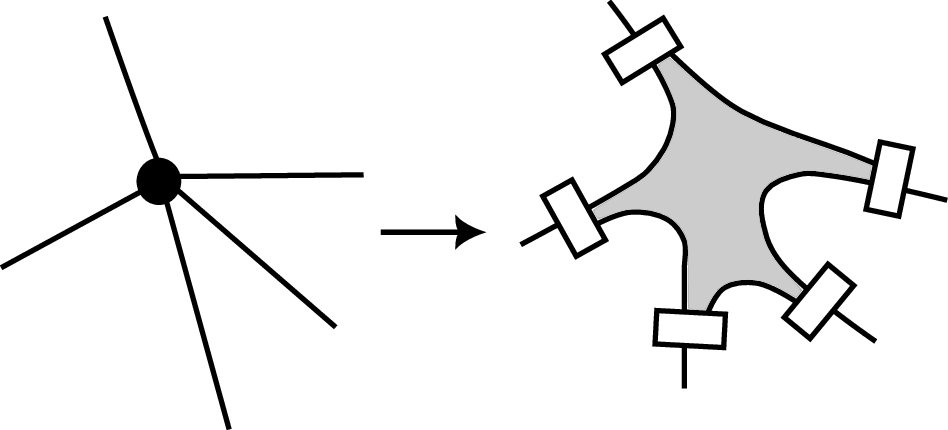} 
\caption{\label{f.G} Left: A vertex of valence 5 in $G$. Right: Corresponding skein element picture with 5 Jones-Wenzl projectors abutting the shaded region.}
\end{figure} 

We first convert $S$ to a KTG $S'$ using only the fusion move. 

\noindent \textbf{Case 1:} If every vertex of $G$ has valence 3, then $L^n_J$ is already a KTG without crossings, and we can reduce it to the theta KTG as described in Section \ref{ss.convertktg}. 

\noindent \textbf{Case 2:} Suppose a region corresponding to a vertex in $G$ has more than three Jones-Wenzl projectors abutting the region, corresponding to a vertex in $G$ with valence $|v| >3$. We fuse two edges adjacent to an edge connecting a pair of projectors. This incorporates the pair of projectors into a new trivalent vertex, with the remaining $|v|-1$ projectors abutting a new region $R'$.  We repeat the procedure with the remaining projectors that are not part of  a trivalent vertex, gradually converting the entire graph into a KTG.  Call the new KTG $S'$, we reduce it to the theta KTG again by the steps described in Section \ref{ss.convertktg}. 

Reverse all moves above to get that the theta graph generates $S$ via a sequence of only triangle moves and the inverses of the fusion move. Note the untwisting move was not needed because $S$ and therefore $S'$ has no crossings. 

\end{proof} 

\begin{defn}{(\cite[Definition 6]{vdV2009})} Define the unnormalized $N$ colored Jones invariant $\langle \Gamma \rangle_N(A)$ of a KTG $\Gamma$ to be the Kauffman bracket of the skein element obtained from a diagram of $\Gamma$ in the plane by replacing every edge by $n$ parallel edges joined by a $n$ Jones-Wenzl projector and every vertex by a trivalent skein vertex (as on the left in Figure \ref{f.triv}). 
\end{defn}

Although not used in this paper, van der Veen showed $\langle \Gamma \rangle_N(A)$ is a well-defined invariant of KTG's under trivalent isotopy moves \cite[Proposition 1]{vdV2009}. 

\begin{defn}
Define the normalized (reduced) colored Jones invariant of a KTG $\Gamma$ with $s$ split components to be $\widehat{J}_{N=n+1}(\Gamma) = \langle \Gamma \rangle_N/ \langle U^s \rangle_N$, where $U^s$ is the $s$-component unlink.  
\end{defn}

Similar to the colored Jones invariant for KTGs, we define $\widehat{J}^{\infty}_{n+1}(L; A) = \langle L^n_J \rangle / \langle \jwc_n \rangle$.

\subsection{Proof of Theorem \ref{t.volume}}

In this section we prove Theorem \ref{t.volume}. The argument reduces to evaluating the Kauffman bracket of  trivalent graphs in which every edge is colored by $2n$, and no fusion terms are present since we assume that the link is obtained from the $2n$ colored theta graph only by applying the triangle move. See Figure \ref{f.linkr} below for an illustration on the link in Figure \ref{f.link}.
\begin{figure}[H]
\includegraphics[scale=.7]{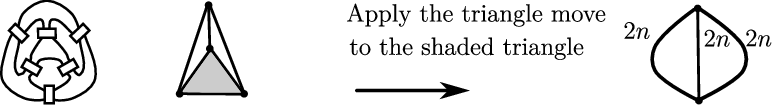}
\caption{\label{f.linkr}Reducing $L^n_J$ to a theta graph using only the triangle move. In the middle figure every edge is labeled with $2n$.}
\end{figure}  

Comparing with van der Veen's work in \cite{vdV2009}, for an augmented KTG he selects a particular term in the sum from fusion with parameter $j_1=n, j_2=n, \ldots, j_t=n$ when $n$ is even by augmentation. In our setting we do not need to do this because the limiting skein element $L_J^n$ does not have any crossings to begin with. The rest of the proof follows a similar argument to van der Veen, except that we use recent results \cite{BDK}, \cite{CM} to evaluate the polynomial at a root of unity $e^{ \frac{\pi i}{2N-1}}$ near the root of unity $e^{ \frac{\pi i}{2N}}$ considered by Conjecture \ref{c.volume}.

\tvolume*

\begin{proof} 
We compute $\widehat{J}_{N=n+1}( L^n_{J}) $ as a sum of rational functions involving the tetrahedron coefficient and theta functions.  By assumption, from the $2n$ colored theta graph we arrive at $L^n_J$ in a sequence of moves  containing only triangle moves. Therefore, $L^n_J$ is a KTG and $\widehat{J}^{\infty}_{N}( L^n_{J})$ is the normalized $n$ colored Jones invariant of $L^n_J$.  Since $L^n_J$ contains a single split component, we have 
\[ \widehat{J}_N^\infty(L; A)  = \begin{pmatrix} \frac{\tetran }{\theta(2n, 2n, 2n)} \end{pmatrix}^T \frac{\theta(2n, 2n, 2n)}{\langle \jwc_{n} \rangle}.  \] 

Again we note that the entries of our tetrahedron coefficients and the $6j$-symbols are by color rather than by the dimension of the irreducible representation as in \cite{vdV2009}. Through a similar computation to \cite{vdV2009}, with the dimension $N' = 2n+1$ corresponding to the color $2n$, we get 
\[ \lim_{N\rightarrow \infty}  (N')^{-1} \log \bigg \lvert \frac{\theta(2n, 2n, 2n)}{\langle \jwc_n \rangle} \bigg\rvert_{e^{\frac{\pi i}{N'}}} \bigg \rvert = \lim_{N\rightarrow \infty} (N')^{-1} \log \bigg \lvert\frac{O(2n)}{O(n)} \bigg\rvert_{e^{\frac{\pi i}{N'}}}  \bigg\rvert = 0. \]

This is because $e^{\frac{\pi i}{N'}}$ is a $2N'$th-root of unity, which implies $O(N+k-1) = (-1)^n O(k-1) = O(N-k-1)$, and these relations can be used to simplify $\frac{\theta(2n, 2n, 2n)}{O(n)} = \frac{O(2n)}{O(n)}$ and  $\frac{\theta(2n, 2n, 2n)}{O(2n)} = \pm 1$ at $A =  e^{\frac{\pi i}{N'}}$. 
Therefore 
\begin{align*}  & \lim_{N\rightarrow \infty} \frac{ \log \bigg \lvert \sixj \bigg \rvert_{e^{\frac{\pi i}{N'}}} \bigg \rvert}{N'} = \\
& = \lim_{N\rightarrow \infty} (N')^{-1} \log \left | \left. \frac{O(2n) \tetran}{(\theta(2n, 2n, 2n))^2} \right | _{e^{\frac{\pi i}{N'}}}  \right | =  \lim_{N\rightarrow \infty} (N')^{-1} \log  
\left | \left. \frac{ \tetran}{\theta(2n, 2n, 2n)} \right | _{e^{\frac{\pi i}{N'}}} \right |. 
\end{align*} 

Costantino \cite{Costantino} showed $\displaystyle \lim_{N'\rightarrow \infty} \frac{\pi}{2N'} \log 
\left \lvert  \left. \sixj \right \rvert_{e^{\frac{\pi i}{2N'}}} \right \rvert = v_8$.  The root of unity $e^{\frac{\pi i}{2N'}}$ can be replaced by its square $e^{\frac{\pi i}{N'}}$, see \cite[Lemma 3.13]{BDK} and \cite[Theorem 1]{CM}, to give
\[ \lim_{N'\rightarrow \infty} \frac{\pi}{N'} \log\left \lvert \left. \sixj \right\rvert_{e^{\frac{\pi i}{N'}}}  \right \rvert = v_8. \]

Thus with $N' = 2n+1 = 2(n+1)-1 = 2N-1$ we have 
\[  2\pi \lim_{N\rightarrow \infty} \frac{\log \left \lvert  \widehat{J}_N^\infty(L; e^{\frac{\pi i}{N'}}) \right \rvert }{N'} 
 =  T \lim_{N\rightarrow \infty} \frac{2\pi}{2N-1} \left. \log \begin{vmatrix}  
\sixj \end{vmatrix}_{e^{\frac{\pi i}{2N-1}}} \right \rvert = 2T v_8. \]

\end{proof}

\bibliographystyle{plain}
\bibliography{references}

\end{document}